\newcommand{\R}{\mathbb{R}}
\begin{document}
\title{Propagation of bursting oscillations in coupled non-homogeneous Hodgkin-Huxley Reaction-Diffusion systems}

\titlerunning{Networks of Hodgkin-Huxley Reaction-Diffusion systems}        

\author{B. Ambrosio, M.A. Aziz-Alaoui and A. Balti}


\institute{B. Ambrosio, M.A. Aziz-Alaoui and A.Balti \at Normandie Univ, UNIHAVRE, LMAH, FR-CNRS-3335, ISCN, 76600 Le Havre, France\\
             \email{benjamin.ambrosio@univ-lehavre.fr}           }


\maketitle

\begin{abstract}
In this paper, we consider networks of reaction-diffusion systems of Hodgkin-Huxley type.
We give a general  mathematical framework, in which we prove existence and unicity of solutions as well as the existence of invariant regions and of the attractor.
Then, we illustrate some relevant numerical examples and exhibit bifurcation phenomena and propagation of bursting oscillations through one and two coupled non-homogeneous systems.
\keywords{Complex Dynamical Systems \and Hodgkin Huxley Equations \and Reaction Diffusion Systems}
\end{abstract}

\section{Introduction}
Action potential propagation  is a crucial phenomenon for information process  in the nervous system and particularly in the brain.
One of the paradigmatic models, which has been proposed to describe action potential  propagation, is the Hodgkin-Huxley model.
Written initially in 1952, see \cite{HH}, for the description of the electrical activity of the squid giant axon, 
its formalism has served as basis of number of models widely used in Mathematical Neuroscience,
see for example \cite{Ambrosio0,Ambrosio1,Ambrosio2,C1,Fitz,Hin,Mor,Nagumo} and references therein cited.
At that time, Hodgkin and Huxley  used the new voltage clamp technique to maintain constant the membrane potential.
This technique allowed them to elaborate and fit the functional parameters of their model of four ODEs with their experiments. 
Basically, the model is obtained by considering the cell as an electrical circuit. The membrane acts as a capacitor 
whereas ionic currents result from ionic channels acting as variable voltage dependent resistances.
The model takes into account three ionic currents: potassium ($K^+$), sodium ($Na^+$) and leakage (mainly chlorure, $Cl^-$). The  Hodgkin-Huxley (HH) system reads as:
\begin{equation}\label{eq:HH}
\left\{
\begin{array}{rcl}
CV_t  &=&I+\overline{g}_{Na} m^3 h(E_{Na}-V)+\overline{g}_{K} n^4(E_{K}-V)+\overline{g}_{L}(E_{L}-V)\\
n_t  &=& \alpha_{n}(V)(1-n)-\beta_{n}(V)n\\
m_t  &=& \alpha_{m}(V)(1-m)-\beta_{m}(V)m\\
h_t  &=& \alpha_{h}(V)(1-h)-\beta_{h}(V)h,\\
\end{array}
\right.
\end{equation}
%
where subscript $t$ stands for derivative $\frac{d}{dt}$ and
where $I$ is the external membrane current, $C$ is the membrane capacitance, $ \overline{g}_{i},\  E_i, i\in\{K,Na,L\}$ are respectively the maximal 
conductances and the
(shifted) Nernst equilibrium potentials. 
The functions $\alpha (V) $ and $\beta (V)$ describe the transition rates between open and closed states of channels. They read as:
\begin{equation}
\label{eq:param}
\begin{array}{rcl}
 \alpha_{n}(V)=0.01 \frac{-V+10}{\exp{(1-0.1V)}-1},& & \beta_{n}(V) =0.125\exp(-V/80),\\
 \alpha_{m}(V)=0.1 \frac{-V+25}{\exp{(2.5-0.1V)}-1},& & \beta_{m}(V) =4 \exp(-V/18),\\
\alpha_{h}(V)= 0.07\exp(-V/20), & & \beta_{h}(V) = \frac{1}{ 1+\exp( -0.1V+3) )}.
\end{array}
\end{equation}
%

The (shifted) Nernst equilibrium potentials are given by:
$$E_{K}=-12 \;mV,~~E_{Na}=120\; mV,~~E_{L}=10.6 \;mV$$
$$\overline{g}_{K}=36 \;mS/cm^2, \overline{g}_{Na}=120 \;mS/cm^2,\overline{g}_{L}=0.3 \;mS/cm^2.$$
These values are taken from \cite{Izh}, p 37-38, and correspond to those of the Hodgkin-Huxley original paper \cite{HH}, after a change of variables $V=-V$. 
Recall that the Nernst equilibrium potentials are obtained by solving, for each ion $i\in\{K,Na,L\}$ the equation:
\[E_i=\frac{RT}{zF}\ln\frac{[i]_{out}}{[i]_{in}},\]
where $[i]_{in}$ and $[i]_{out}$ are concentrations of the ions inside and outside the cell. $R=8.315$ is the universal gas constant, $T$ is temperature in Kelvin, $F=96, 48$
is the Faraday’s constant, $z$ is the valence of the ion. For example, this computation gives for sodium, with $T=293$, $[i]_{out}=440$, $[i]_{in}=40$ (see \cite{Izh} p 50):
\[E_{Na}\simeq 55,\]
which with a shift of $+65$ gives the value of $120$ used here. Originally, Hodgkin and Huxley used the shift to obtain a potential at rest of approximately $0$.
Before going into more theoretical aspects, we give some interpretation about the form of conductances. The proportion of open potassium channels is $n^4$. 
This comes from the fact that $4$ opening gates of potassium are required to open the potassium channel. Hence, the $n$ gives 
the probability of the gate to be in active state and 
results in the $n^4$ term for potassium. For the sodium, it is supposed that there are there are three gates which open the channels  and one which close them.
Hence, the proportion of 
sodium opened channels is given by $m^3h$, where  it is supposed that $m$ stands for the probability of sodium opening gates to be active 
while $h$ stands for  probability of sodium closing gates
to be active.   For more details on various aspects of the HH model, we refer to \cite{Cro,Erm,Izh} and the original paper \cite{HH}. 
In the present paper, we  consider a general network of reaction-diffusion (RD) HH equations. Note that the study of networks of reaction-diffusion systems start to
attract an increasing number of mathematical analysis, see for example, \cite{Ambrosio1,Ambrosio2,Yang}. They appear naturally in the neuroscience context. 
The general network reads as:
\begin{equation}\label{eq:NnoeudHH-orig}
\left\{
\begin{array}{rcl}
V_{it} &=&dV_{ixx}+ I+\overline{g}_{Na} m_i^3 h(E_{Na}-V_i)+\overline{g}_{K} n_i^4(E_{K}-V_i)+\overline{g}_{L}(E_{L}-V_i)\\
& &+H_i(V_1,...,V_N),\qquad i\in \{1,...,N\},\\
n_{it}  &=& \alpha_{n}(V_i)(1-n_i)-\beta_{n}(V_i)n_i\\
m_{it}  &=& \alpha_{m}(V_i)(1-m_i)-\beta_{m_i}(V)m_i\\
h_{it}  &=& \alpha_{h}(V_i)(1-h_i)-\beta_{h}(V_i)h_i.\\

\end{array}
\right.
\end{equation} 
in a bounded domain $\Omega=(a,b)\subset \R$ and with Neumann boundary conditions $V_{ix}(a)=V_{ix}(b)=0$ and
where,

\begin{equation}
\label{eq:HiNonLin}
H_i(u)=\sum_{j\in\{1,...,N\}}\alpha_{ij}(S-u_i)\Gamma(u_j) ,
\end{equation}
\begin{equation*}
\Gamma(s)=\frac{1}{1+\exp^{-\lambda(s-\theta)}},
\end{equation*}
with 
\begin{equation}
\label{eq:param2}
S=100,\lambda=20, \theta=60,
\end{equation}
and $\alpha_{ij}\geq 0$.\\
The functions $H_i$ represent excitatory nonlinear coupling (i.e. chemical synaptic coupling) between neurons, see \cite{Ambrosio3,Ambrosio4,Bel,C1} and
references therein cited.
Without loss of generality, we have set the constant $C$ equal to $1$. Hence, the value $S=100$ allows the neuron $i$ to receive  an excitatory input each time
one of his
presynaptic neurons crosses the treshold value $\theta=60$. The value $\lambda=20$, implies that  $\Gamma$  approximates the Heaviside function. 
The Neumann condition is chosen here to not induce boundary effects. Here, $I$ and $\alpha's$ are regular non negative bounded function of $x$.
Our first aim is to provide a  mathematical framework for solutions of system \eqref{eq:NnoeudHH-orig}. This is done in section 2. We prove the global existence 
of solutions as well as the existence of invariant regions and the existence of an attractor. 
Then, in the third section, we deal with numerical simulations for a single non-homogeneous HH RD system and two coupled HH RD systems.
In this last part, we exhibit some relevant bifurcation phenomena and propagation of bursting oscillations from first  neuron to the second neuron. 
Note that system \eqref{eq:NnoeudHH-orig} is autonomous, which means that  the bursting oscillations are generated here without external driving component as in 
\cite{Ambrosio0}. See also \cite{Cle}. 
\section{Mathematical Framework}

\subsection{Existence of the solution}
Let $X=C(\bar{\Omega})=C([a,b])$ the space of continuous functions defined on the real interval $[a,b]$. 
We start with the proof of the existence of the solution of \eqref{eq:NnoeudHH-orig}. 
The proof is divided into four parts. First, we prove the local existence and uniqueness of a mild-solution, thanks to a fixed point theorem. Then, we prove the regularity 
of this mild solution. Finally, we prove the bounds for $n_i,m_i,h_i$ as well as the global existence. 
\begin{theorem}
For all initial conditions $(V_i(0),n_i(0),m_i(0),h_i(0))\in X^{4N}$ such that $\forall x \in [a,b]$, $n_i(0,x),m_i(0,x),h_i(0,x) \in [0,1]$,
there exists a unique solution $U \in C([0,+\infty[,X^{4N})$ of \eqref{eq:NnoeudHH-orig}. Furthermore, $U \in 
C^1(]0,+\infty[,X^{N})$ and for all $t>0$, $n_i(t,x),m_i(t,x),h_i(t,x) \in [0,1]$.
\end{theorem}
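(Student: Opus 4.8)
The plan is to recast \eqref{eq:NnoeudHH-orig} as a semilinear abstract Cauchy problem on the Banach space $E=X^{4N}$ and to run the classical programme for such problems, following the four parts announced. Write $U=(V_i,n_i,m_i,h_i)_i$ and put $U_t=AU+F(U)$, where $A$ acts as $d\,\partial_{xx}$ with Neumann boundary conditions on each of the $N$ potential components and as the zero operator on the $3N$ gating components. Since the Neumann Laplacian on $X=C([a,b])$ generates an analytic semigroup, $A$ generates an analytic semigroup $(S(t))_{t\ge 0}$ on $E$, acting as the heat semigroup on the $V$-block and as the identity on the gating block. The map $F$ gathers all the remaining reaction and coupling terms; because the rate functions $\alpha_\bullet,\beta_\bullet$ of \eqref{eq:param} and the sigmoid $\Gamma$ are smooth and the rest of the dependence is polynomial, $F$ is of class $C^1$, in particular locally Lipschitz from $E$ into $E$. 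First I would invoke the standard contraction argument for the Duhamel map $U\mapsto S(\cdot)U(0)+\int_0^{\cdot}S(\cdot-s)F(U(s))\,ds$ on $C([0,\tau],E)$ with $\tau$ small, obtaining a unique mild solution on a maximal interval $[0,T_{\max})$.

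Second, for the regularity I would use the smoothing property of the analytic semigroup together with the $C^1$ regularity of $F$ to bootstrap the mild solution into a classical one on $(0,T_{\max})$, so that $t\mapsto U(t)$ is $C^1$ in time for the $V$-components, valued in $X^N$. The gating components solve, at each fixed $x$, linear scalar ODEs with continuous coefficients and are therefore automatically $C^1$ in $t$.

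The decisive structural step is the third one, the invariant bounds $n_i,m_i,h_i\in[0,1]$. Fix $x$ and a gating variable, say $n_i$, and regard $t\mapsto V_i(t,x)$ as a given continuous coefficient. Then $n_{it}=\alpha_n(V_i)(1-n_i)-\beta_n(V_i)n_i$ is linear in $n_i$ of the form $n'=a(t)-(a(t)+b(t))n$ with $a=\alpha_n(V_i)\ge 0$ and $b=\beta_n(V_i)\ge 0$, whose explicit solution is
\[
n_i(t,x)=n_i(0,x)\,e^{-\int_0^t(a+b)}+\int_0^t a(s)\,e^{-\int_s^t(a+b)}\,ds .
\]
Since $0\le a\le a+b$ and $\int_0^t(a+b)e^{-\int_s^t(a+b)}\,ds=1-e^{-\int_0^t(a+b)}$, one reads off that $n_i(t,x)\in[0,1]$ whenever $n_i(0,x)\in[0,1]$. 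The same computation applies verbatim to $m_i$ and $h_i$; the only facts used are the nonnegativity of $\alpha_\bullet,\beta_\bullet$.

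Finally, for global existence I would produce a uniform a priori bound on the potentials and invoke the blow-up alternative ($T_{\max}<\infty\Rightarrow\|U(t)\|_E\to\infty$ as $t\to T_{\max}$); this is where I expect the real difficulty to lie. Using the gating bounds just obtained, together with $0\le\Gamma\le 1$ and the boundedness of $I$ and of the $\alpha_{ij}$, the reaction term of the $V_i$-equation,
\[
g_i=I+\overline{g}_{Na}m_i^3h_i(E_{Na}-V_i)+\overline{g}_{K}n_i^4(E_{K}-V_i)+\overline{g}_{L}(E_{L}-V_i)+\sum_j\alpha_{ij}(S-V_i)\Gamma(V_j),
\]
has $V_i$-coefficient bounded above by $-\overline{g}_{L}<0$, so $g_i\to\mp\infty$ as $V_i\to\pm\infty$ uniformly over the admissible range of the remaining variables; hence there exist constants $V_-<V_+$ with $g_i\ge 0$ on $\{V_i=V_-\}$ and $g_i\le 0$ on $\{V_i=V_+\}$. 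Because the diffusion matrix is diagonal (and degenerate on the gating block), the rectangle $[V_-,V_+]\times[0,1]^{3}$ is inward-pointing on every face, and the parabolic comparison principle — equivalently the Chueh--Conley--Smoller invariant-region theorem — yields a bound on $V_i(t,x)$ uniform in $t\in[0,T_{\max})$. The delicate point is precisely to check that the nonlinear coupling $H_i$ does not spoil the inward-pointing condition and that the degeneracy of the diffusion is compatible with the invariant-region machinery; the gating bounds of the previous step are exactly what keep the coefficients in $g_i$ uniformly controlled, so that $V_\pm$ can be fixed independently of $t$. The resulting uniform bound forbids blow-up, whence $T_{\max}=+\infty$ and the solution is global.
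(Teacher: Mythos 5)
Your proposal is correct, and its first three steps --- the contraction argument for the Duhamel map on $C([0,\tau],X^{4N})$, the upgrade to $C^1$ regularity in time for $t>0$ (with the caveat, which the paper also notes, that differentiability at $t=0$ may fail unless $V_i(0)$ is twice differentiable), and the explicit variation-of-constants formula giving $n_i,m_i,h_i\in[0,1]$ from $\alpha_\bullet\ge 0$, $\beta_\bullet\ge 0$ --- coincide with the paper's proof. Where you genuinely diverge is the global-existence step: the paper uses no invariant-region or comparison machinery there. It exploits instead the fact that, once the gating variables are confined to $[0,1]$ and $0\le\Gamma\le 1$, the right-hand side of the $V_i$-equation is affine in $V_i$ with uniformly bounded coefficients, so the mild formulation together with $\|S(t)u\|_X\le\|u\|_X$ gives $\|V_i(t)\|_X\le\|V_i(0)\|_X+\int_0^t\bigl(C_1\|V_i(s)\|_X+C_2\bigr)ds$, and Gronwall yields an at-most-exponential bound that excludes finite-time blow-up. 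Your route --- inward-pointing rectangle plus the blow-up alternative --- is also valid and buys a stronger, uniform-in-time bound, but two points deserve care: (i) the rectangle $[V_-,V_+]$ must be chosen large enough to contain the arbitrary (bounded, continuous) initial data, which is possible precisely because the coefficient of $V_i$ is at most $-\overline{g}_L<0$, so the inward-pointing condition holds for all sufficiently extreme $V_\pm$; and (ii) you do not actually need the Chueh--Conley--Smoller theorem with its degenerate-diffusion caveat: since the gating bounds are already established independently by the ODE formula, and $\Gamma(V_j)\in(0,1)$ is bounded regardless of the size of $V_j$, each $V_i$ solves a scalar parabolic equation (classical for $t>0$ by your regularity step) with bounded forcing and strictly negative linear coefficient, so the ordinary scalar maximum principle closes the argument componentwise. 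Note finally that the paper does prove an invariant-region result of exactly your flavor, but as a separate theorem with the sharper rectangle $[E_K,E_{Na}]\times[0,1]^3$ and initial data restricted to it; your step four in effect merges a coarser version of that theorem into the existence proof, trading the paper's more elementary Gronwall estimate for a uniform bound.
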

  \begin{proof}
It is  known that the equation
\[u_t=u_{xx}\]
with 
\[u_x(a)=u_x(b)=0\]
generates  an analytical semigroup on $X$, see \cite{Lunardi}, Theorem 3.1.22 p 98.
 Let us denote by $S(t)_{t\geq0}$ this semigroup. 
  The first step is to look for mild solutions of   \eqref{eq:NnoeudHH-orig}. 
  Mild-solutions are solutions of the variation of constants formula of \eqref{eq:NnoeudHH-orig}, and written with the semigroup notation. 
  The general network equation for mild-solutions reads as:
\begin{equation}\label{eq:NnoeudHH-mild}
\left\{
\begin{array}{rcl}
V_i(t)  &=&\displaystyle  S(t)V_i(0)+\int_0^tS(t-s)(F_{V}(V_i(s),n_i(s),m_i(s),h_i(s))\\
& &+H_i(V_1,...,V_N))ds\\
n_i(t) &=&\displaystyle n_i(0)+\int_0^tF_{n}(V_i(s),n_i(s))ds\\
m_i(t) &=&\displaystyle m_i(0)+\int_0^tF_{m}(V_i(s),m_i(s))ds\\
h_i(t) &=&\displaystyle h_i(0)+\int_0^tF_{h}(V_i(s),h_i(s))ds,\\
\end{array}
\right.
\end{equation} 
where:
\begin{equation}
\label{eq:F_VF_n}
\begin{array}{rcl}
F_V(V,n,m,h)&=& I+\overline{g}_{Na} m^3 h(E_{Na}-V)+\overline{g}_{K} n^4(E_{K}-V)+\overline{g}_{L}(E_{L}-V),\\
F_n(V,n) &=&\alpha_{n}(V)(1-n)-\beta_{n}(V)n,\\
F_m(V,n)&=&   \alpha_{m}(V)(1-m)-\beta_{m}(V)m,\\
F_h(V,n)&=&\alpha_{h}(V)(1-h)-\beta_{h}(V)h.\\

  \end{array}
  \end{equation}
The local existence  follows by application of the fixed point theorem, to the function:
\begin{equation*}
\Phi_{U_0}:C([0,T],X^{4N})\rightarrow C([0,T],X^{4N})
\end{equation*} 
defined by the right-hand side of \eqref{eq:NnoeudHH-mild}, in which we fix the initial condition $U_0 \in X^{4N}$ (below, for sake of simplicity, 
we drop the subscript $U_0$ on $\phi$).
Let $U$ denote the function $(V_i,n_i,m_i,h_i)(t),i \in\{1,...,N\}, t \in [0,T]$. For all $U\in B(U_0,r)$, the ball of center $U_0$ and radius $r$ in $C([0,T],X^{4N})$,
we have, for fixed $T>0$
\[||\Phi(U)-U_0||\leq KT,\]
where $||\cdot||$ stands here for the sup norm on $C([0,T],X^{4N})$ and where $K$ is a constant depending on $r$ which comes from the boundedness theorem.  
We have also used the property of the linear semigroup $S(t)$: $||S(t)u||_X\leq ||u||_X$ and $\lim_{t\rightarrow 0} S(t)u_0=u_0$, see \cite{Lunardi} p 35. 
It follows that, for $T<\frac{r}{K}$, $\Phi(U)\in B(U_0,r)$.
By analog computations, we prove that
\[||\Phi(U_1)-\Phi(U_2)||\leq K_2T||U_1-U_2||,\]
which proves that for $T<\min(\frac{1}{K_1},\frac{1}{K_2})$, $\phi$ is a contraction mapping.
Therefore, $\Phi$ has  a unique fixed point which provides the existence and uniqueness of the local mild  solution.\\
Now, we deal with the regularity. We already know, that, solutions belong to $C([0,T],X^{4N})$. 
Since the $H$ term, does not complicate the proof, without loss of generality, we drop the subscript $i$.
A computation shows that the quantity
\begin{equation*}
\frac{V(t+h)-V(t)}{h}
\end{equation*}
is well defined, for $t>0$, as $h \rightarrow 0$ and that the resulting function is contiuous on $X$. For $t=0$, the derivative may not exist,
since $\frac{S(h)V_0-V_0}{h}$ admits a limit as $h\rightarrow 0$ if and only if $v_0\in C''([a,b])$. For functions $m,n$ and $h$ the same computation shows that they 
belong to $C^1([0,T],X)$. Hence, the regularity statement follows.\\
Now, we deal with  the global existence.
First, we prove that $n_i, m_i$ and $h_i$  remain in $[0,1]$. Without loss of generality, we drop the $i$ subscript and consider only the $n$ variable.\\
It satisfies,
\[n_t=\alpha(V)(1-n)-\beta(V)n\]
Let
\[A(t)=\int_{0}^{t} \alpha(V) ds,~~B(t)=\int_{0}^{t} \beta(V) ds.\]
Then,
\[n(t)=\exp(-A(t)-B(t))\big[n_0+\int_{0}^{t}\alpha(V(s))\exp(A(s)+B(s))  ds\big]\]
with $0 \leq  \alpha(V)$ and $ 0\leq \beta(V) $.\\
It follows that:
\[ n(t)\geq 0.\]
 Also,
 \begin{equation*}
 \begin{array}{rcl}
   n(t)& \leq & \displaystyle \exp(-A(t)-B(t))\big[n_0+\displaystyle \int_{0}^{t}( \alpha(V) +\beta(V))\exp{(A(s)+B(s)})ds\big]\\
    &=&\exp({-A(t)-B(t)})\big[n_0+(\exp{(A(t)+B(t))}-1)ds\big]\\
   &=& 1+(n_0-1)\exp(-A(t)-B(t))\\
   &\leq& 1.
 \end{array}
 \end{equation*}

Then, the global existence follows from the inequality,
\begin{equation*}
||V_i(t)||_X\leq ||V_i(0)||_X+\int_0^t(C_1||V_i(s)||_X+C_2)ds
\end{equation*}
which comes from the first equation of \eqref{eq:NnoeudHH-orig} and
which by Gronwall inequality yields:
\[\||V_i(t)||_X \leq ||V_i(0)||_X \exp{(C_1t)}+\frac{C_2}{C_1}.\]

\end{proof}

\subsection{Existence of an invariant region}
We assume that the values of parameters are as specified in \eqref{eq:param} and \eqref{eq:param2}. 
This implies in particular  that are $E_K<E_L<S<E_{Na}$. We also assume that $\bar{g}_L(E_{Na}-E_L)>\sup_{x\in [a,b]}I(x)$\footnote{We can drop this last asumption. 
In this case, the upper bound $E_{Na}$ of the invariant region for $V_i$ is replaced by $\frac{1}{\bar{g}_L}\sup_{x\in [a,b]}I(x)+E_L$.}. 
\begin{theorem}
We assume that $\forall i \in \{1,...,N\}$, $ \forall x\in [a,b]$ $V_i(0,x) \in  [E_K,E_{Na}]$ and $n_i(0,x),m_i(0,x),h_i(0,x) \in [0,1]$, then $\forall t>0$ and  $\forall x\in (a,b)$,
\[V_i(t,x) \in  [E_K,E_{Na}] \mbox{ and } n_i(t,x),m_i(t,x),h_i(t,x) \in [0,1].\]
\end{theorem}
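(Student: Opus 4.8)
The plan is to treat the gating variables and the voltages separately. For $n_i,m_i,h_i$, the bound $n_i(t,x),m_i(t,x),h_i(t,x)\in[0,1]$ for $t>0$ has already been established in the proof of the previous theorem (via the integrating-factor representation), and since these variables carry no diffusion their invariance is a purely pointwise-in-$x$ ODE fact; nothing further is required for them. The genuine content is therefore the invariance of the interval $[E_K,E_{Na}]$ for each $V_i$, which I would obtain from the parabolic maximum principle. The mechanism is the standard one for invariant rectangles: on each of the two faces $V_i=E_{Na}$ and $V_i=E_K$ of the box $[E_K,E_{Na}]\times[0,1]^3$ the reaction term must point strictly inward, \emph{uniformly in the values of the remaining neurons}, so that the coupling $H_i$ cannot spoil the estimate.

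First I would verify the sign of the full right-hand side on the upper face. Setting $V_i=E_{Na}$, the sodium term $\overline{g}_{Na}m_i^3h_i(E_{Na}-V_i)$ vanishes, the potassium term $\overline{g}_{K}n_i^4(E_K-E_{Na})\le 0$ since $E_K<E_{Na}$, and the leakage term equals $-\overline{g}_L(E_{Na}-E_L)$. Crucially, because $S<E_{Na}$ and $\alpha_{ij},\Gamma\ge 0$, one has $H_i=\sum_j\alpha_{ij}(S-E_{Na})\Gamma(V_j)\le 0$ \emph{regardless} of the other $V_j$. Using the hypothesis $\overline{g}_L(E_{Na}-E_L)>\sup_x I$, the sum of all reaction terms is bounded above by $I-\overline{g}_L(E_{Na}-E_L)<0$, hence is strictly negative, and by continuity it remains negative just above the face (at $V_i=E_{Na}+\epsilon$ it is even more negative). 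I would make this rigorous by a first-exit argument: for fixed $\epsilon>0$ set $t_0=\inf\{t>0:\ \exists\, i,x,\ V_i(t,x)=E_{Na}+\epsilon\}$. Since $V_i(0,x)\le E_{Na}<E_{Na}+\epsilon$ and $V$ is continuous on the compact set $\{1,\dots,N\}\times[a,b]$, if $t_0<\infty$ the value $E_{Na}+\epsilon$ is attained at some index/point $(i_0,x_0)$ with $t_0>0$; there $V_{i_0}(t_0,x_0)$ is a spatial maximum, so $dV_{i_0xx}(t_0,x_0)\le 0$, while $t_0$ being the first touching forces $V_{i_0t}(t_0,x_0)\ge 0$. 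Substituting into the equation yields $0\le V_{i_0t}=dV_{i_0xx}+(\text{reaction})<0$, a contradiction; letting $\epsilon\to 0$ gives $V_i\le E_{Na}$.

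The lower bound $V_i\ge E_K$ is entirely symmetric. On the face $V_i=E_K$ the potassium term vanishes, the sodium term $\overline{g}_{Na}m_i^3h_i(E_{Na}-E_K)\ge 0$, the leakage term $\overline{g}_L(E_L-E_K)>0$ since $E_L>E_K$, while $I\ge 0$ and $H_i\ge 0$ because $S>E_K$; hence the reaction is bounded below by $\overline{g}_L(E_L-E_K)>0$. The same first-exit argument, now applied to $E_K-\epsilon$ at a spatial minimum (so $dV_{i_0xx}\ge 0$ and $V_{i_0t}\le 0$ at the first touching), produces the contradiction $0\ge V_{i_0t}>0$.

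I expect the delicate points to be analytic rather than algebraic. The sign computations are immediate once the ordering $E_K<E_L<S<E_{Na}$ and the current bound are in force, and the network coupling is harmless precisely because $H_i$ has a definite sign on each face independently of the other components. The genuine obstacle is justifying the pointwise use of $V_{ixx}\le 0$ (resp. $\ge 0$) at the touching point when $x_0\in\{a,b\}$: this does not follow from maximality alone and must be recovered from the Neumann condition $V_{ix}(a)=V_{ix}(b)=0$, e.g. by an even reflection across the boundary or a Hopf-type argument. One must also invoke the smoothing of the analytic semigroup established earlier to guarantee that $V_i(t,\cdot)\in C^2$ with $V_{it}$ well defined for $t>0$, so that the classical parabolic comparison is applicable.
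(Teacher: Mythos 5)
Your proposal is correct and follows essentially the same route as the paper: a first-touching (maximum-principle) argument checking that on each face $V_i=E_K$ and $V_i=E_{Na}$ the reaction terms, including the coupling $H_i$ (harmless because $E_K<S<E_{Na}$) and the leakage term dominating $I$ via $\overline{g}_L(E_{Na}-E_L)>\sup_x I$, point strictly inward, yielding a sign contradiction with $V_{it}$ at the first touching point. Your $\epsilon$-shifted exit time and your attention to the case $x_0\in\{a,b\}$ (handled via the Neumann condition by reflection or a Hopf-type argument) are rigorizations of details the paper's brief proof leaves implicit.
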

\begin{proof}
We have already  proved that $n_i(t,x),m_i(t,x),h_i(t,x) \in [0,1]$.
To prove $V_i(t,x) \in  [E_K,E_{Na}]$ we use the following argument. 
Let $t_0$ the first time that $V_i$ reaches the value $E_K$ at the point $x_0$, then $\frac{\partial^2 V_i}{\partial x^2}\geq 0$ 
which implies $V_{it}(t_0,x_0)>0$ since all the other terms are non negative and $\bar{g}_L(E_L-E_K)>0$.
Analogously, let $t_0$ the first time that $V_i$ reaches the value $E_{Na}$ at the point $x_0$ then $\frac{\partial^2 V_i}{\partial x^2}\leq 0$ 
which implies $V_{it}(t_0,x_0)<0$  since all the other terms are non positive and $\bar{g}_L(E_L-E_{Na})<0$. 
\end{proof}
\subsection{Existence of an attractor}
We denote by $K=C([a,b],([E_K,E_{Na}]\times[0,1]^3)^N)$. In this section, we use the framework of \cite{Tem} to establish the existence of an invariant, compact and connected 
set $\mathcal{A}\subset K$ which attracts all tre trajectories starting in $K$. We first recall the definition of the $\omega-$limit set. 
We denote by $(T(t))_{t\geq 0}$ the semi-group associated with equation \eqref{eq:NnoeudHH-orig}.
\begin{definition}
For $U_0 \in K$, we define the $\omega-limit$ set of $U_0$ by:
\[\omega(U_0)=\bigcap_{t>0}\overline{\bigcup_{s \geq t}T(s)U_0},\]
and similarly the $\omega-limit$ set of $K$ by:
\[\omega(K)=\bigcap_{t>0}\overline{\bigcup_{s \geq t}T(s)K}.\]
\end{definition}
The following theorem holds.
\begin{theorem}
\label{th:comp}
Let 
\[\mathcal{A}=\omega(K).\]
Then $\mathcal{A}$ is a nonempty invariant, compact and connected set in $X^{4N}$. Furthermore, for all initial condition $U_0$ in $K$,
the solution $T(t)U_0$ of  \eqref{eq:1noeudHH-orig} starting at $U_0$ verifies:
\[ \lim_{t\rightarrow +\infty}\inf_{z\in \mathcal{A}}||T(t)U_0-z||_X=0.\] 
\end{theorem}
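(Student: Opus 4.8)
The plan is to apply the abstract theorem on the existence of a global attractor from \cite{Tem}. That framework requires three ingredients: (i) that $(T(t))_{t\geq 0}$ be a continuous semigroup on a complete metric space; (ii) the existence of a bounded absorbing set; and (iii) the uniform compactness of the semigroup for large times, meaning that for some $t_0>0$ the set $\bigcup_{t\geq t_0}T(t)K$ is relatively compact in $X^{4N}$. Granting these, the theorem yields that $\mathcal{A}=\omega(K)$ is nonempty, compact and invariant and attracts $K$; connectedness will follow from a separate remark below.

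The first two hypotheses are essentially in hand. Continuity of $T(t)$ in $t$ and in the initial datum follows from the well-posedness and the contraction estimates established in the proof of Theorem 1. The role of the absorbing set is played by $K$ itself: by Theorem 2 the region $K$ is positively invariant, so $T(t)K\subseteq K$ for all $t\geq 0$, and $K$ is bounded in $X^{4N}$ since its elements take values in the compact box $[E_K,E_{Na}]\times[0,1]^3$. Thus every trajectory issued from $K$ remains in $K$, which is then trivially absorbing for the dynamics restricted to it.

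The heart of the proof, and the step I expect to be the main obstacle, is the verification of (iii); I would treat the diffusing and non-diffusing components separately. For the potentials $V_i$ I use the variation of constants formula \eqref{eq:NnoeudHH-mild} together with the regularizing effect of the analytic semigroup $S(t)$: for $t>0$, $S(t)$ maps the bounded set $K$ into a bounded subset of $C^1([a,b])$, which by the Arzel\`a--Ascoli theorem is relatively compact in $X$; since $F_V$ and $H_i$ are bounded on $K$, the same smoothing applies to the Duhamel integral, so $\{V_i(t)\}_{t\geq t_0}$ is equicontinuous in $x$ and relatively compact. The gating variables are more delicate, as they solve ODEs pointwise in $x$ and enjoy no intrinsic spatial smoothing. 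Here I would slave their spatial regularity to that of $V_i$: writing the $n_i$ equation as $\partial_t n_i=\alpha_n(V_i)-(\alpha_n(V_i)+\beta_n(V_i))n_i$ and noting that on $[E_K,E_{Na}]$ the damping coefficient satisfies $\alpha_n(V_i)+\beta_n(V_i)\geq\delta>0$ while $\alpha_n,\beta_n$ are Lipschitz, a Gronwall estimate on $n_i(t,x_1)-n_i(t,x_2)$ gives
\[
|n_i(t,x_1)-n_i(t,x_2)|\leq e^{-\delta t}|n_i(0,x_1)-n_i(0,x_2)|+C\int_0^t e^{-\delta(t-s)}|V_i(s,x_1)-V_i(s,x_2)|\,ds.
\]
The first term decays to zero and the second is controlled by the modulus of continuity of $V_i$, already shown uniformly small for $s\geq t_0$; hence $\{n_i(t)\}$ (and likewise $m_i,h_i$) is equicontinuous in $x$ for large $t$, and being uniformly bounded it is relatively compact in $X$ by Arzel\`a--Ascoli. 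Collecting the components yields the uniform compactness (iii).

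Finally, for connectedness, observe that $K=C([a,b],([E_K,E_{Na}]\times[0,1]^3)^N)$ is a convex subset of the Banach space $X^{4N}$, hence connected; the continuity of $T(t)$ then forces $\mathcal{A}=\omega(K)$ to be connected, by the corresponding statement in \cite{Tem}. The essential difficulty throughout is the compactness of the non-diffusing gating components, and the mechanism that resolves it is the exponential damping built into their ODEs combined with the spatial smoothing of the $V_i$ transmitted through the reaction terms.
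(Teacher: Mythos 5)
Your proposal is correct and follows essentially the same route as the paper: the Temam framework with the invariant region $K$ as (trivially) absorbing set, parabolic smoothing of $S(t)$ through the Duhamel formula to get equicontinuity of the $V_i$, and the exponential damping $\alpha+\beta\geq\delta>0$ in a Gronwall estimate on $n_i(t,x_1)-n_i(t,x_2)$ to slave the gating variables' spatial regularity to that of $V_i$. Your two-term Gronwall bound is exactly the paper's splitting $T_2=T_2^1+T_2^2$ (decaying part plus uniformly Lipschitz, hence precompact, part), and your appeal to standard $C^1$-smoothing of the heat semigroup replaces, with an integrable singularity just as well, the paper's more detailed derivation of the bound on $V_x$ via $W^{2,p}$ estimates and the Haraux--Kirane interpolation inequality.
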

\begin{proof}
We use the framework of \cite{Tem}. Hence, we need to establish some compacity on trajectories. For convenience, we drop the subscript $i$. For variable $V$,
without loss of generality, we write
\begin{equation}
V(t)=T_1(t)U_0=S(t-t_0)V(t_0)+\int_{t_0}^tS(t-s)F_V(m,n,h)ds,
\label{eq:V}
\end{equation}
since the coupling term $H$ does not change the proof arguments.
We prove that $\overline{\cup_{t\geq t_0} T_1(t)K}$, is compact in $X$, for some $t_0>0$. By Ascoli's theorem, it is sufficient to prove the equicontinuity  of 
\[\bigcup_{t \geq t_0}T_1(s)K\] 
 for some $t_0>0$. 
 Then, we deal with variable $n$. For, variables $m,h$ the same arguments apply. We prove that:  
 \[n(t)=T_2(t)U_0=T^1_2(t)U_0+T^2_2(t)U_0,\]
 where 
 \[\lim_{t\rightarrow +\infty}\sup_{s\geq t,U_0\in K}||T^1_2(s)U_0||_X=0,\]
 and
 \[\overline{\cup_{t\geq t_0} T_2^2(t)K} \mbox{ is compact for some } t_0>0.\]
 According to \cite{Tem}, this suffices to prove theorem \ref{th:comp}.
 Let us start with $V$. Recall that for all $t>0$, $V(t)\in C^\infty([a,b])$. Now, we establish a bound on $V_x$. For all $u\in C^2([a,b])$ it holds,
\[u'=\int_a^xu''(y)dy+u'(a)=\int_a^xu''(y)dy,\]
 since we assume $u'(a)=0$. It follows that
\[||u'||_X\leq (b-a)||u"||_X,\]
and that:
\begin{equation}
||u||_{2,p}\leq C(||u||_p+||u"||_p),
\label{AgDoNi}
\end{equation}
with $C=(1+(b-a)^{p+1})^{\frac{1}{p}}$, where $||\cdot||_{k,p}$ denotes the norm in the Sobolev space $W^{k,p}$ and $||\cdot||_p$ denotes the norm in the Lebesgue space $L^{p}$. Note that inequality \eqref{AgDoNi}, is actually a particular case of  Agmon-Douglis-Niremberg estimates, 
see \cite{Lunardi} p 72 and \cite{AgDoNi}. However, since we consider here the 1-dimensional case the proof is much more simpler. 
Next, we adapt some arguments from \cite{Har}. First, note that the following classical inequalities hold:
\begin{equation*}
||S(t)u_0||_p\leq C_p||u_0||_p, \forall t \in ]0,1],
\end{equation*}
and
\begin{equation*}
||AS(t)u_0||_p\leq \frac{C_p}{t}||u_0||_p, \forall t \in ]0,1],
\end{equation*}
for some generic constants $C_p$, see  \cite{Lunardi} p 35. This implies that:
\begin{equation*}
||S(t)u_0||_{2,p}\leq C(1+\frac{1}{t})||u_0||_p, \forall t \in ]0,1],
\end{equation*}
Now, let $\theta \in ]0,1[$, $p$ and $r$ such that:
\begin{equation}
1<\theta(2-\frac{1}{p})-\frac{1-\theta}{r},
\end{equation}
for example, we can choose $\theta=\frac{3}{4}$, $p>3$ and $r>3$. Then, we have
,see \cite{Har} and also \cite{Henry},
\begin{equation*}
||u||_{1,\infty}\leq C(\theta)||u||^\theta_{2,p}||u||_r^{1-\theta}.
\end{equation*}
Finally, we obtain that for all $t\in ]0,1]$:
\begin{equation*}
||S(t)u_0||_{1,\infty}\leq C(1+\frac{1}{t})^\theta||u_0||_p^\theta||u_0||_r^{1-\theta}.
\end{equation*}
Applying this result in \eqref{eq:V} implies:
\begin{equation}
\label{eq:boundvx}
||V_x(t)||_X \leq C_1||V(t_0)||_X+C_2\int_{t_0}^t\frac{1}{(t-s)^\theta}ds,
\end{equation}
where $C_2$ comes from the fact that $V,m,n,h$ remain in $[E_K,E_{Na}]\times[0,1]^3$. This implies that
\[||V_x(t)||_X \leq C ,\forall t \in [t_0,t_0+1],\]
where the constant $C$ depends on $K$ but not on $t_0$, neither in $U_0$. This proves the equicontinuity of  
\[\bigcup_{t \geq t_0}\bigcup_{U_0\in K}V(t).\] 
Now, we deal with $n$. We compute,
\begin{equation*}
|n(x,t)-n(y,t)|,
\end{equation*}
we obtain (for convenience, we only write the dependence on $x$ and $y$),
\begin{equation*}
\frac{\partial}{\partial t}(n(x)-n(y))=-(\alpha(y)+\beta(y))(n(x)-n(y))+\big((1-n(x))(\alpha(x)-\alpha(y))+n(x)(\beta(y)-\beta(x))\big) 
\end{equation*}
which by integration leads to 
\begin{equation*}
|n(x,t)-n(y,t)|\leq \exp(-\xi t)|n(x,0)-n(y,0)|+\frac{C}{\xi}|x-y|,
\end{equation*}
where $\xi=\inf_{y\in [a,b]}(\alpha(y)+\beta(y))>0$ and $C$ is a generic constant. This gives the announced statement.
\end{proof}
\begin{remark}
The difficult point in the proof was to obtain the bound \eqref{eq:boundvx}.
\end{remark}

%

\section{Bifurcations and propagation of oscillations in one and two coupled non-homogeneous neurons}
\subsection{Bifurcation in a single non-homogeneous neuron}
In this section, we deal with equation  \eqref{eq:NnoeudHH-orig} with $N=1$. It reads:
\begin{equation}\label{eq:1noeudHH-orig}
\left\{
\begin{array}{l}
V_t  =dV_{xx}+ I(x)+\overline{g}_{Na} m^3 h(E_{Na}-V)+\overline{g}_{K} n^4(E_{K}-V)+\overline{g}_{L}(E_{L}-V)],\\
\\
n_t  = \alpha_{n}(V)(1-n)-\beta_{n}(V)n, \\
\\
m_t  = \alpha_{m}(V)(1-m)-\beta_{m}(V)m,\\
h_t  = \alpha_{h}(V)(1-h)-\beta_{h}(V)h.\\

\end{array}
\right.
\end{equation} 
We have proceeded to numerical simulations in the domain $\Omega=(a,b)=(0,100), d=1$, on time interval $[0,500]$,
using our own C++ program with a uniform finite difference scheme in space and a Runge-Kutta method
integration in time. Hence, the space-mesh reads, $(ih)_{i \in \{0,100\}}, h=1$, and the time-mesh  reads, $(n\Delta t)_{n \in \{0,50000\}}, \Delta t=0.01.$ Note that since the equation 
for the potential is linear in $V$ and the equation for channels are linear in $n,m,h$. One can, from step to step, compute $n(t+\Delta t),m(t+\Delta t),h(t+\Delta t)$  for fixed $V(t)$. And use a similar argument for $V$ computation. 
A discussion on this aspects can be found in \cite{Bal}.
The particularity is that we choose a regular non homogeneous $I$, approximating the discontinuous function $\tilde{I}$ defined by:
\begin{equation}\label{eq:I}
\tilde{I}(x)=\left\{
\begin{array}{rl}
I_0  &\mbox{ if }  x<\frac{b-a}{10}\\
0& \mbox{ otherwise }
\end{array}
\right.
\end{equation}
In the following descriptions, for simplicity we identify $\tilde{I}(x)$ with $I(x)$. We will use the same convention for $\alpha_{21}$ on section 3.2. 

 We emphasize here three different regimes corresponding to different region of parameter value $I_0$.
\subsubsection*{Stationary and periodic solutions for $I_0\simeq 5.2$}
\textbf{Convergence toward stationary solution for $I_0=5.2$}\\
For several initial conditions and for  small $I_0$, i.e. for $0\leq I_0\leq 5.2$, 
we obtain asymptotically a convergence toward a steady state. In figure \ref{fig2}, we have illustrated this phenomenon for $I_0=5.2$, for initial condition $(1,1,1,1)$. 
The first panel illustrates the stationary space dependent solution, which is attained asymptotically. 
The second panel illustrates the time evolution for $x=0$ and $x=100$.\\
\textbf{Convergence toward periodic solution for $I_0=5.3$}\\
For the same initial conditions as before and for  $I_0=5.3$, we obtain asymptotically a convergence toward a periodic solution.  
This periodic solution consists of large solutions starting at $x=0$ and propagating form left to right. 
We have illustrated this phenomenon in figure \ref{fig3-a}, for initial condition $(1,1,1,1)$. 
The first panel illustrates the propagation of oscillations from left to right. In fact, it shows the solution along the space for a fixed time $t=500$. 
As in the previous figures, the second panel illustrates the time evolution for $x=0$ and $x=100$. It shows periodic solutions.\\
\textbf{Convergence toward stationary solution for $I_0=5.3$}\\
We note for this value the coexistence of attractive stationary and periodic solutions. Indeed, for $I_0=5.3$, and initial condition $(0,0,0,0)$, we observe 
a convergence toward a stationary solution. The phenomenon is depicted as before in figure \ref{fig3-b}.
\subsubsection*{Propagation of busting oscillations}

\textbf{Convergence toward a solution with propagation of busting oscillations for $I_0=130$}\\
For  $I_0=130$, we observe  propagation of busting oscillations. The left cells oscillate continuously, but not all the oscillations propagate trough the right-cells. In fact, the right-cells alternate quiescent phases with oscillating phases, which is a bursting phenomenon. We illustrate this phenomena in figure \ref{fig4}. The left-top panel illustrates the time evolution at $x=0$. We observe  oscillations. The right-top panel illustrates the time evolution at $x=100$. We observe bursting phenomena. The left-middle panel illustrates the time evolution at $x=8$. We observe the so-called mixed-mode oscillations: an alternation of small and large oscillations. The right-middle panel illustrates the time evolution at $x=8$, in the (V,n,m) variables. The left-down panel illustrates the propagation of oscillations in space at fixed time $t=200$.  The right-down panel illustrate the quiescent phase in space at fixed time $t=250$.\\
\subsubsection*{Death-spot}
\textbf{Convergence toward a solution with death-spot $I_0=145$}\\
For  $I_0=145$, we observe this another interesting phenomenon. The left cells oscillate continuously, but the wave propagation fails to propagate along the whole space. This is known as the death-spot phenomena, see \cite{Ambrosio0,Yan1,Yan2} and  references therein cited. The right cells seem to have reached a stationary state whereas the left cells are still oscillating. The left-top panel illustrates the time evolution at $x=0$. We observe  oscillations. The right-top panel illustrates the time evolution at $x=100$. We observe a near-stationary solution. The left-middle panel illustrates the time evolution at $x=8$. We observe small  oscillations. The right-middle panel illustrates the time evolution at $x=8$, in the (V,n,m) variables. The left-down panel illustrates the oscillations of left cells and the near stationary state for right cells in space at fixed time $t=200$ and time $t=250$.

\begin{figure}[ht]
\begin{center}
\includegraphics[scale=0.3]{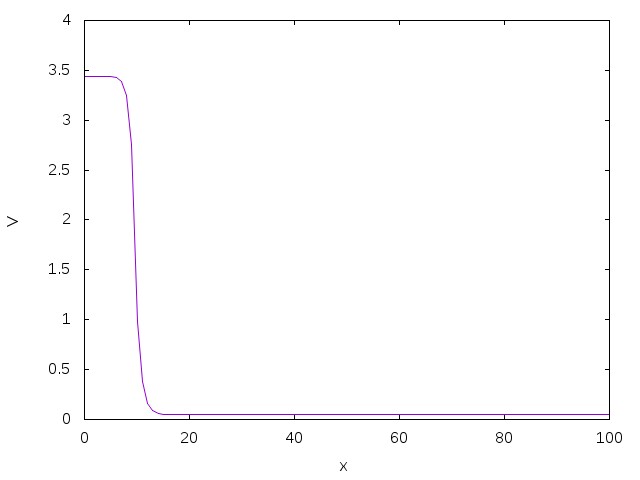}
\includegraphics[scale=0.3]{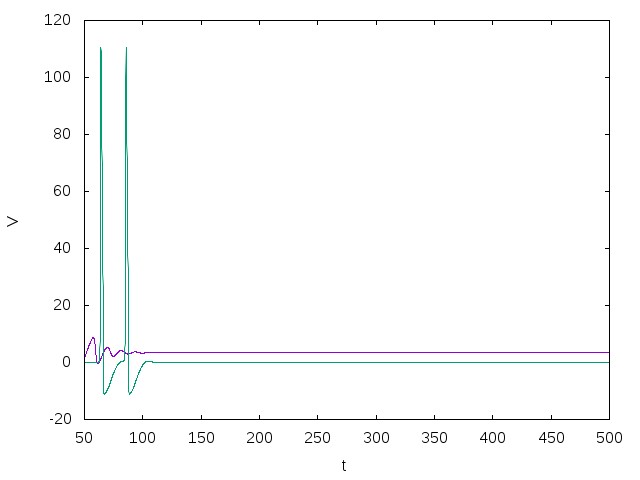}
\caption{Evolution toward a stationary solution for $I_0=5.2$ and initial condition $(1,1,1,1)$. The left panel shows $V(t,x)$ for fixed $t=500$.
The right panel shows time evolution of $V(t,x)$ for $x=0$ and $x=100$.} 
\label{fig2}
\end{center}
\end{figure}
\begin{figure}[ht]
\begin{center}
\includegraphics[scale=0.3]{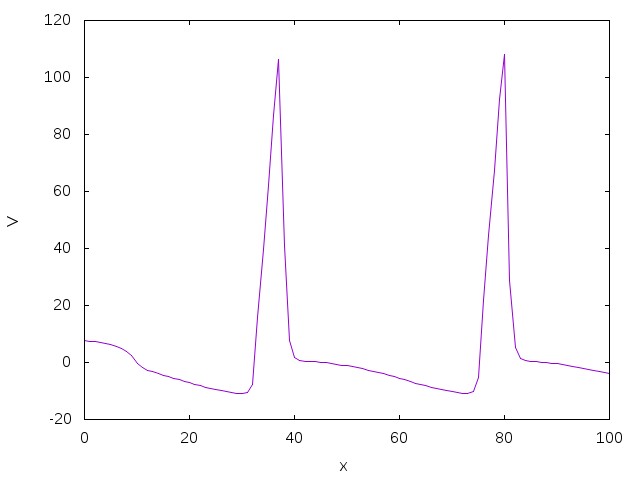}
\includegraphics[scale=0.3]{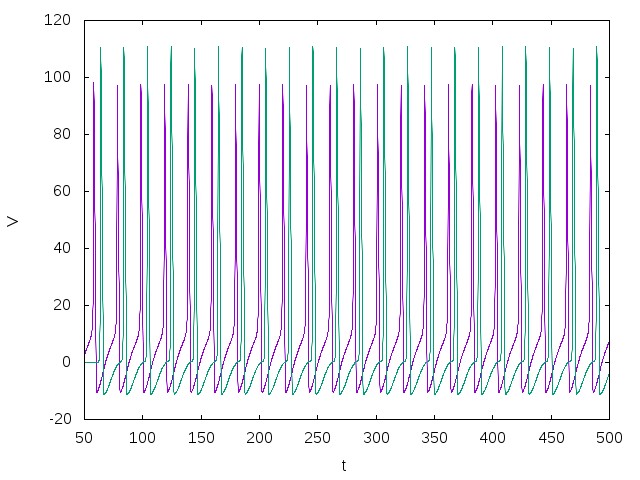}
\caption{Evolution toward a wave propagation solution for $I_0=5.3$ and initial condition $(1,1,1,1)$. The left panel shows $V(t,x)$ for fixed $t=500$. 
The right panel shows time evolution of $V(t,x)$ for $x=0$ and $x=100$.} 
\label{fig3-a}
\end{center}
\end{figure}
\begin{figure}[ht]
\begin{center}
\includegraphics[scale=0.3]{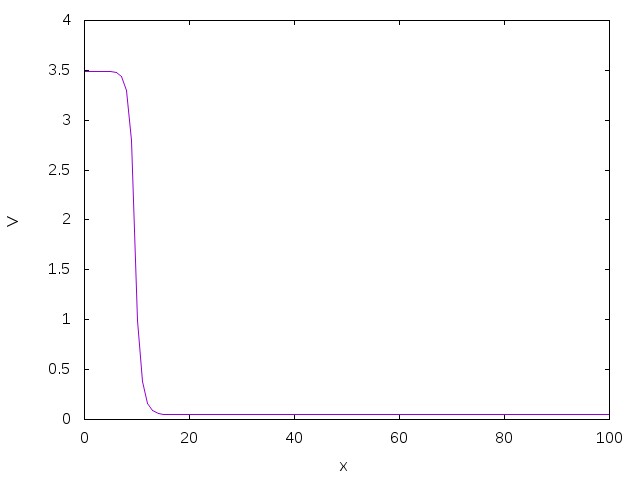}
\includegraphics[scale=0.3]{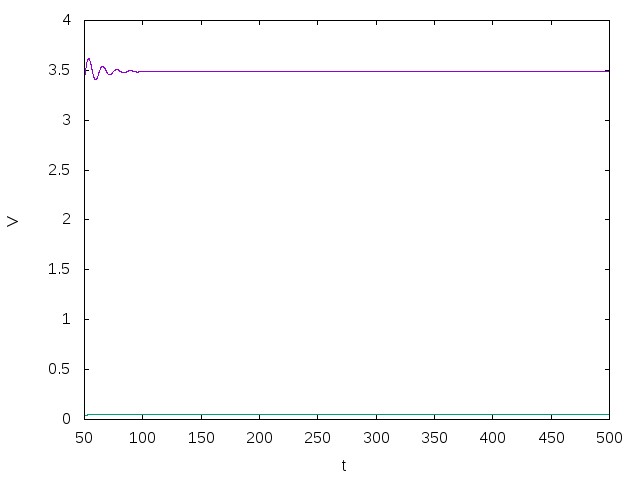}
\caption{Evolution toward a stationary solution for $I_0=5.3$ and initial condition $(0,0,0,0)$. The left panel shows $V(t,x)$ for fixed $t=500$.
The right panel shows time evolution of $V(t,x)$ for $x=0$ and $x=100$.} 
\label{fig3-b}
\end{center}
\end{figure}

\begin{figure}[ht]
\begin{center}
\includegraphics[scale=0.3]{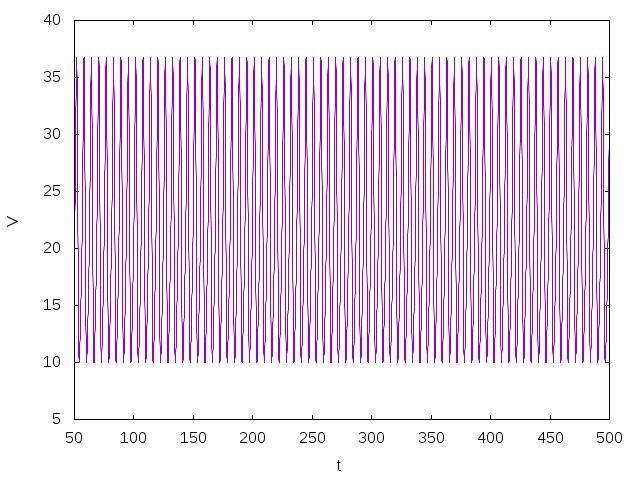}
\includegraphics[scale=0.3]{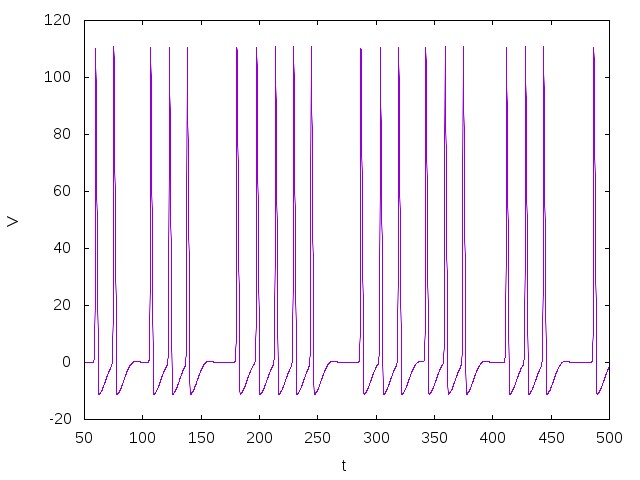}\\
\includegraphics[scale=0.3]{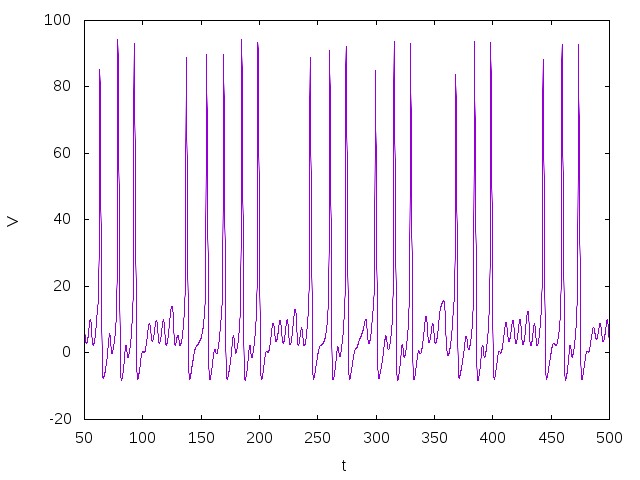}
\includegraphics[scale=0.3]{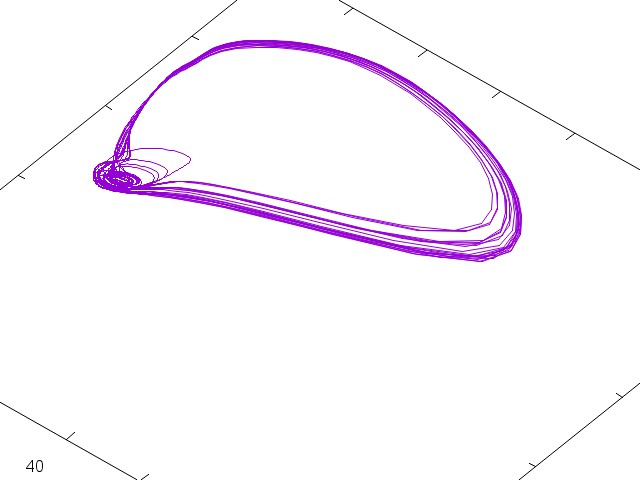}\\
\includegraphics[scale=0.3]{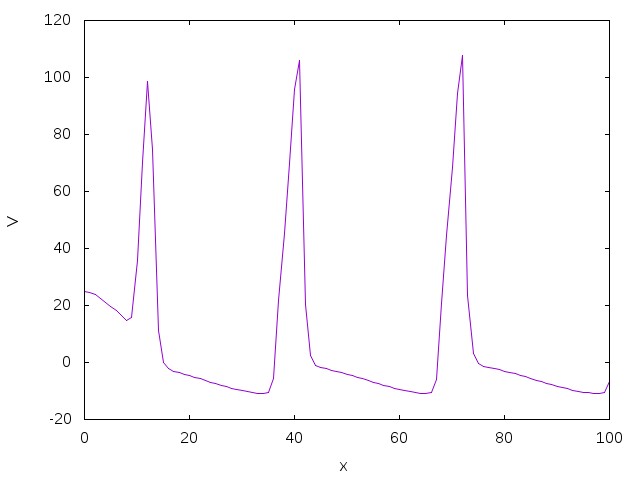}
\includegraphics[scale=0.3]{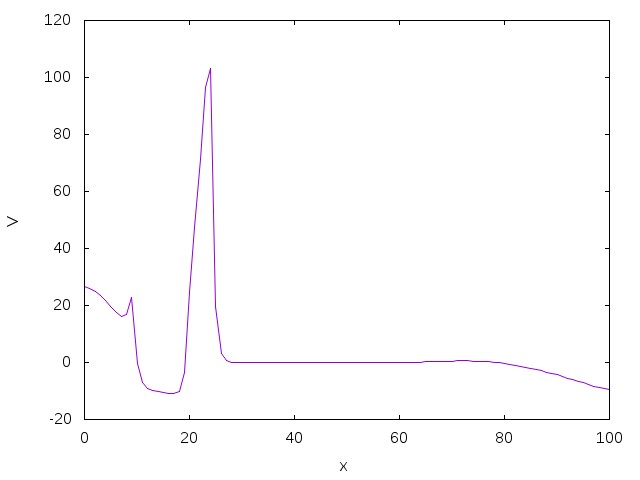}
\caption{Propagation of bursting oscillations for $I_0=130$. The left-top panel shows $V(t,0)$ for $t\in[50,500]$. 
The right-top panel shows $V(t,100)$ for $t\in[50,500]$.The left-middle panel shows $V(t,8)$  for $t\in[50,500]$. 
The right-middle panel shows time evolution of $(V,n,m)(t,8)$  for $t\in[50,500]$. The left-down panel shows $V(t,x)$ for fixed $t=200$.
The right-down panel shows time evolution of $V(t,x)$ for  fixed $t=250$.} 
\label{fig4}
\end{center}
\end{figure}

\begin{figure}[ht]
\begin{center}
\includegraphics[scale=0.3]{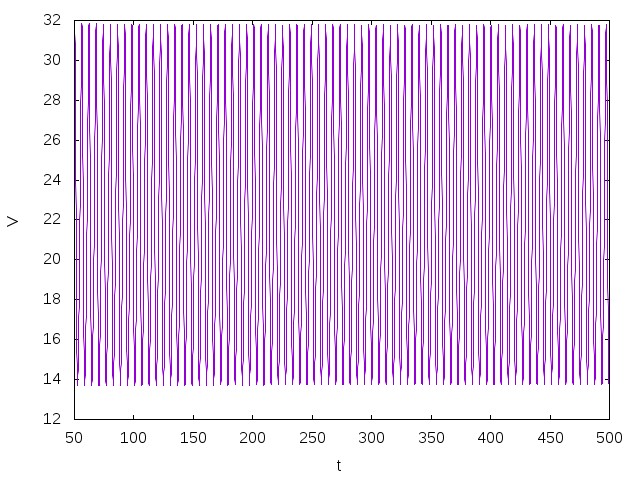}
\includegraphics[scale=0.3]{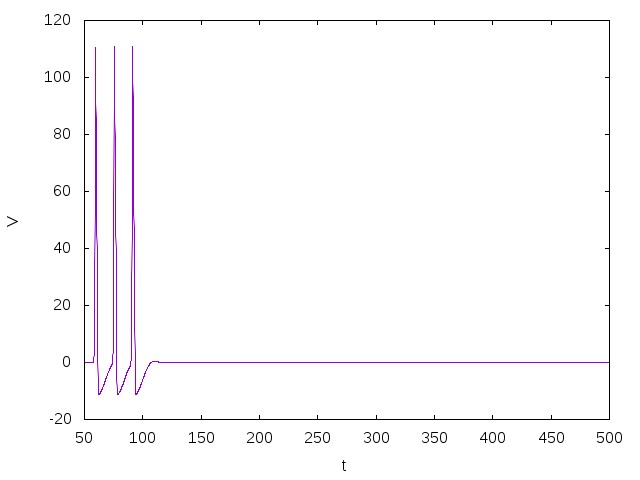}\\
\includegraphics[scale=0.3]{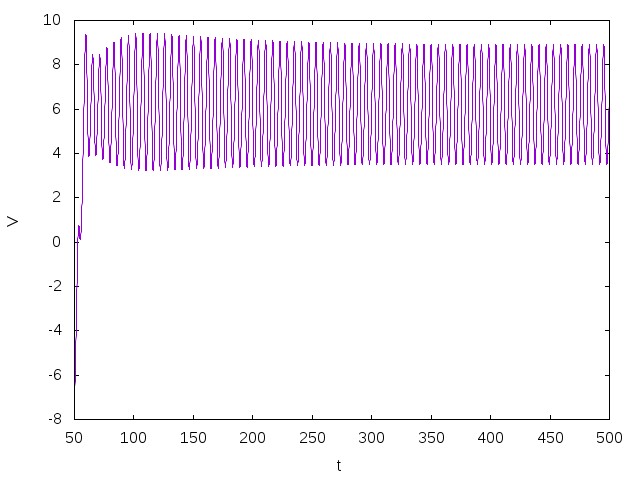}
\includegraphics[scale=0.3]{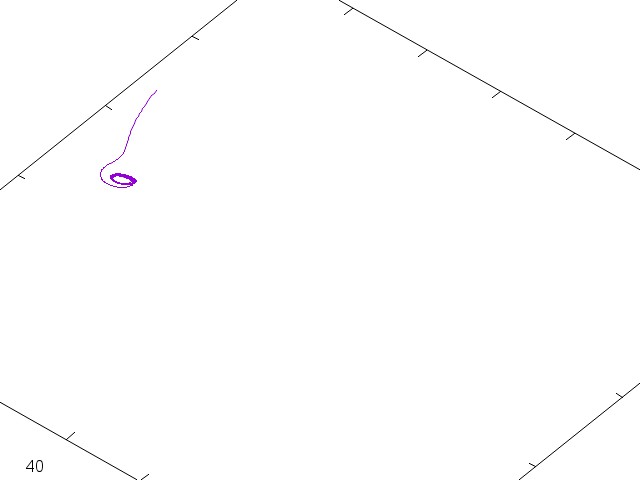}\\
\includegraphics[scale=0.3]{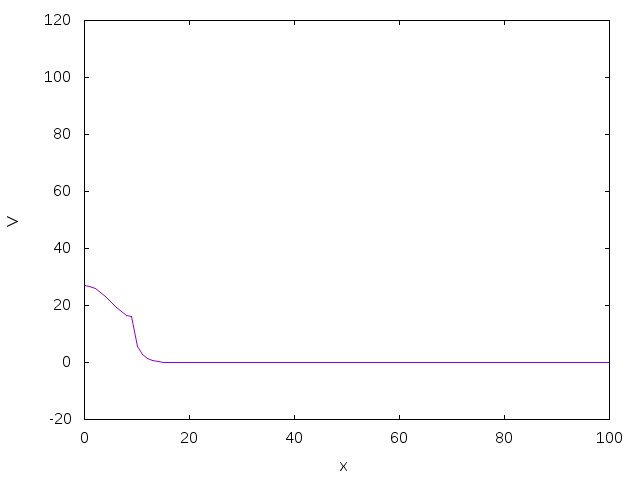}
\includegraphics[scale=0.3]{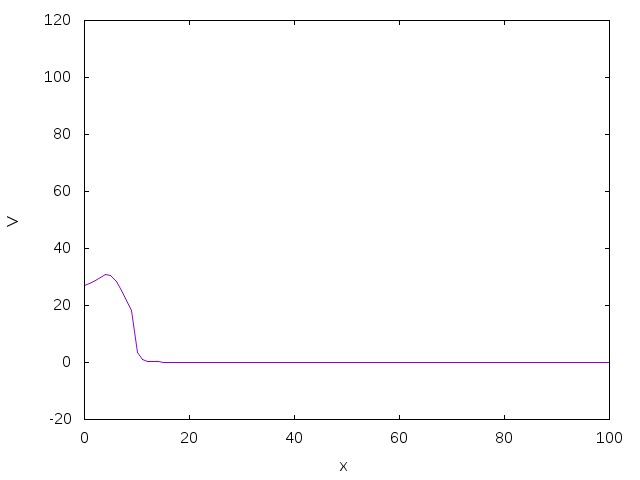}
\caption{Death spot phenomena for $I_0=145$. The left-top panel shows $V(t,0)$ for $t\in[50,500]$. The right-top panel shows $V(t,100)$ for $t\in[50,500]$. 
The left-middle panel shows $V(t,10)$  for $t\in[50,500]$. The right-middle panel shows time evolution of $(V,n,m)(t,10)$  for $t\in[50,500]$.
The left-down panel shows $V(t,x)$ for fixed $t=200$. The right-down panel shows time evolution of $V(t,x)$ for fixed $t=250$.} 
\label{fig5}
\end{center}
\end{figure}

\subsection{Propagation of  bursting oscillations in 2 coupled neurons}
In this section, we deal with equation  \eqref{eq:NnoeudHH-orig} with  $N=2$, $I_2=0$, $\alpha_{ij}=0$ for $(i,j)\neq (2,1)$ and,
\begin{equation}\label{eq:al}
\alpha_{21}(x)=\left\{
\begin{array}{rl}
0  &\mbox{ if }  x<\frac{9}{10}(b-a),\\
1& \mbox{ otherwise. }
\end{array}
\right.
\end{equation}
 It reads:
\begin{equation}\label{eq:2noeudHH-orig}
\left\{
\begin{array}{rcl}
V_{1t}  &=&dV_{1xx}+ I_1(x)+\overline{g}_{Na} m_1^3 h_1(E_{Na}-V_1)+\overline{g}_{K} n_1^4(E_{K}-V_1)+\overline{g}_{L}(E_{L}-V_1),\\
n_{1t}  &=& \alpha_{n}(V_1)(1-n_1)-\beta_{n}(V_1)n_1, \\
m_{1t}  &=& \alpha_{m}(V_1)(1-m_1)-\beta_{m}(V_1)m_1,\\
h_{1t}  &=& \alpha_{h}(V_1)(1-h_1)-\beta_{h}(V_1)h_1,\\
V_{2t}  &=&dV_{2xx}+ I_2(x)+\overline{g}_{Na} m_2^3 h_2(E_{Na}-V_2)+\overline{g}_{K} n_2^4(VE_{K}-V_2)+\overline{g}_{L}(E_{L}-V_2)\\
& &+\alpha_{21}(x)(S-V_2)\Gamma(V_1),\\
n_{2t}  &=& \alpha_{n}(V_2)(1-n_2)-\beta_{n}(V_2)n_2, \\
m_{2t}  &=& \alpha_{m}(V_2)(1-m_2)-\beta_{m}(V_1)m_2,\\
h_{2t}  &=& \alpha_{h}(V_2)(1-h_2)-\beta_{h}(V_2)h_2.\\

\end{array}
\right.
\end{equation} 
We have proceeded to numerical simulations as before with
\begin{equation}\label{eq:I2}
I(x)=\left\{
\begin{array}{rl}
130  &\mbox{ if }  x<\frac{b-a}{10}\\
0& \mbox{ otherwise }
\end{array}
\right.
\end{equation}
As the neuron $1$ is not affected by the coupling, the dynamic of this neuron has already been described. For the second neuron, we observe the propagation of bursting oscillations from neuron $1$ to neuron $2$. As the coupling acts only on the right side of the neuron $2$, oscillations propagate from right to left. It is interesting to note that there is also a short propagation toward the right boundary. We can observe this on the spike with a double head at the right side on the right-down panel of figure 6.

\section{Conclusion}
In this paper, we have considered a general network of HH RD systems. We have proved existence and uniqueness of solutions as well as the existence
of  invariant region and of the attractor in the space of continuous functions. We have also exhibited bifurcation phenomena and propagation of bursting oscillations 
along one and two coupled neurons.
These numerical results show that a rich behavior may be obtained thanks to the space-inhomogeneities. In future works, we aim to develop theoretical tools to describe and analyze
these phenomena.

\section*{Acknowledgments}
The authors would like to thank R\'egion Normandie and 
FEDER XTERM   for financial support.

\begin{figure}[ht]
\begin{center}
\includegraphics[scale=0.3]{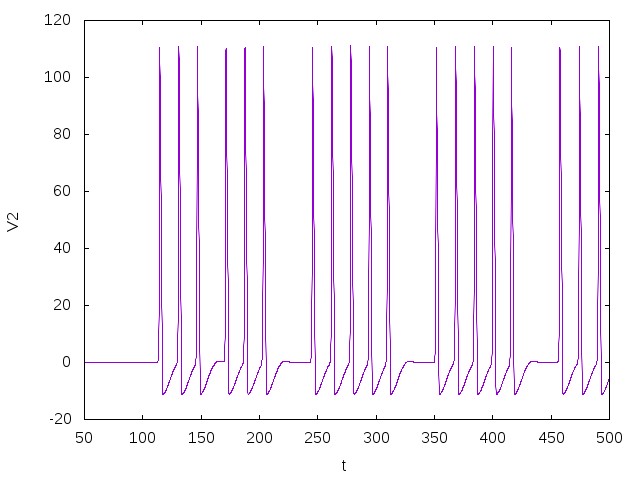}
\includegraphics[scale=0.3]{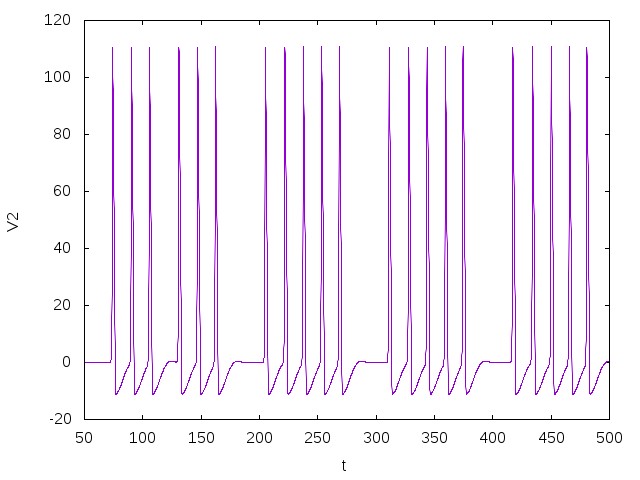}\\
\includegraphics[scale=0.3]{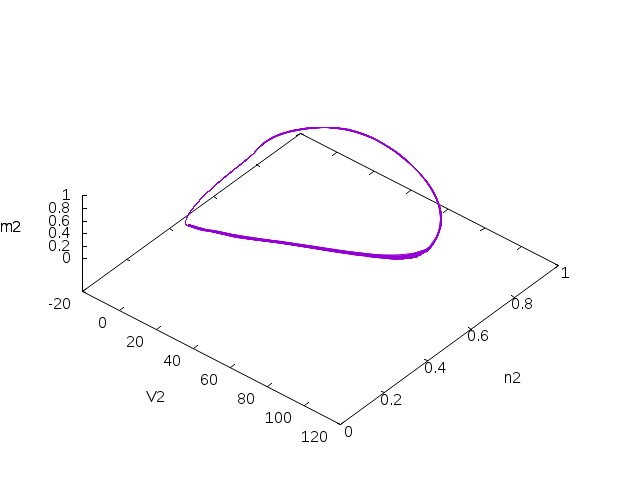}\\
\includegraphics[scale=0.3]{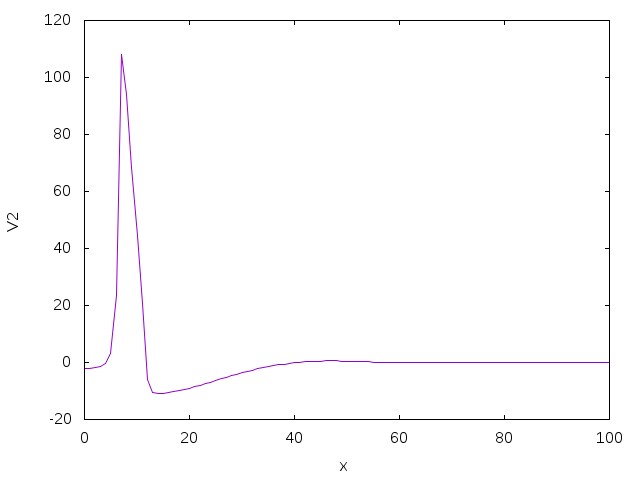}
\includegraphics[scale=0.3]{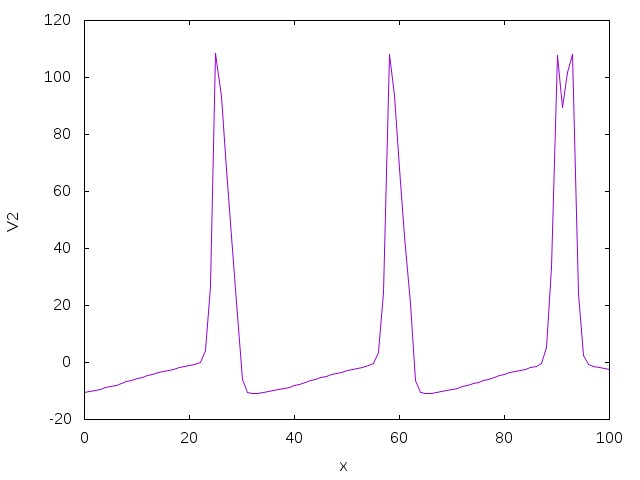}
\caption{Propagation of bursting oscillations in the second neuron.  The left-top panel shows $V(t,0)$ for $t\in[50,500]$. 
The right-top panel shows $V(t,100)$ for $t\in[50,500]$. The middle panel shows $(V,n,m)(t,0)$  for $t\in[50,500]$. 
The left-down panel shows $V(t,x)$ for fixed $t=200$. The right-down panel shows time evolution of $V(t,x)$ for fixed $t=250$.
In the two last panels, we observe that oscillations propagate from right to left, 
with a small propagation from left to right in the right extreme side of the domain.} 
\label{fig6}
\end{center}
\end{figure}

\end{document}